\theoremstyle{plain}
\newtheorem{thm}{Theorem}[section]
\newtheorem*{mainthm}{Main Theorem}
\newtheorem{prop}[thm]{Proposition}
\newtheorem{cor}[thm]{Corollary}
\theoremstyle{definition}
\newtheorem{defn}[thm]{Definition}
\newtheorem*{defn*}{Definition}
\newtheorem*{question*}{Question}
\newtheorem{question}{Question}
\newtheorem{example}[thm]{Example}
\newtheorem*{example*}{Example}
\newtheorem{rem}[thm]{Remark}
\newtheorem*{rem*}{Remark}
\newcommand{\field}[1]{\mathbb{#1}}
\newcommand{\N}{\field{N}}
\newcommand{\Z}{\field{Z}}
\newcommand{\ra}{\rightarrow}
\newcommand{\be}{\begin{enumerate}}
\newcommand{\ee}{\end{enumerate}}
\newcommand{\li}%{{\mathrm{lic}}}
 {\leftfootline}
\renewcommand{\phi}{\varphi}
\DeclareMathOperator{\Frac}{Frac}
\let\int\relax
\DeclareMathOperator{\int}{i}
\author{Neil Epstein}
\address{Department of Mathematical Sciences \\ George Mason University \\ Fairfax, VA  22030}
\email{nepstei2@gmu.edu}
\title{The unit fractions from a Euclidean domain generate a DVR}%{Bonaccian domains and their valuations}
\date{May 25, 2023}
\begin{document}
\maketitle
\begin{abstract}
Let $D$ be a Euclidean domain, with fraction field $K$.  Let $R(D)$ be the subring of $K$ generated by the reciprocals of the nonzero elements of $D$.  The main theorem states that if $R(D) \neq K$, then $R(D)$ is a rank 1 discrete valuation ring that contains a field consisting of the units of $D$ along with $0$.  Connections are made to ideas from medieval Italian mathematics.
\end{abstract}

%ALTERNATE TITLE: The unit fractions from a Euclidean domain generate a DVR

\section{Introduction}

Let $D$ be an integral domain, wth fraction field $K$.  Let $R(D)$ be the subring of $K$ generated by all the \emph{reciprocals} of nonzero elements of $D$.  What can we say about $R(D)$?

The above question is raised implicitly in \cite{GLO-Egypt}, where an integral domain $D$ is called \emph{Egyptian} if any element of $d$ (equiv. any element of $K$) can be written as a sum of reciprocals of distinct elements of $D$.  Since it is also shown in \cite[Theorem 2]{GLO-Egypt} that any element that can be written as a sum of reciprocals from $D$ can also be written as a sum of distinct reciprocals from $D$, we have that $D$ is Egyptian if and only if $R(D)=K$.

In \cite{GLO-Egypt}, it is shown that $\Z$ is Egyptian, but $k[x]$ is not.  This is at first surprising, as both are Euclidean domains.  Thus, the current article seeks to explain the distinction between the two cases.  I show here that for any Euclidean domain $D$, $R(D)$ is either a field or a DVR, and unless the units of $D$ along with $0$ form a field (as is the case with $k[x]$, but not with $\Z$), $D$ is Egyptian (i.e. $R(D)$ is a field).  That is,
\begin{mainthm}[See Theorem~\ref{thm:main}]
    Let $D$ be a Euclidean domain, with fraction field $K$, and let $R$ be the subring of $K$ generated by the set $\{1/d\mid 0\neq d \in D\}$.  Then exactly one of the following is true: \begin{enumerate}
        \item $D$ is Egyptian -- i.e. $R = K$.
        \item $R$ is a (nontrivial) DVR, and the units of $D$ along with $0$ form a field.
    \end{enumerate}
\end{mainthm}

Two corollaries are as follows: \begin{itemize}
    \item\ (Corollary~\ref{cor:EuEg}): Any Euclidean domain that does not contain a field is Egyptian.
    \item\ (Proposition~\ref{pr:EgBo}): Let $D$ be an Egyptian domain, and $x$ an indeterminate.  Then $R(D[x])$ is a DVR.
\end{itemize}

\section{Bonaccian domains, reciprocal complements, and DVRs}

\begin{defn}
Let $D$ be an integral domain, and $K$ its fraction field.  A \emph{unit fraction (from $D$)} is an element of the form $1/d$, $d \in D \setminus \{0\}$.  An element $\alpha \in K$ is \emph{$D$-Egyptian}, or just \emph{Egyptian} if the context is clear (as in \cite{GLO-Egypt}) if it is a sum of reciprocals of distinct elements of $D$ -- i.e. a sum of distinct unit fractions from $D$.  One says that $D$ is \emph{Bonaccian} if for all nonzero $\alpha \in K$, either $\alpha$ or $\alpha^{-1}$ is $D$-Egyptian.  If \emph{every} nonzero element of $K$ (equivalently of $D$) is $D$-Egyptian, we say $D$ is \emph{Egyptian} (as in \cite{GLO-Egypt}).

Set $R(D) := $ the subring of $K$ generated by the unit fractions with denominators in $D$.  We call $R(D)$ the \emph{reciprocal complement} of $D$.  
\end{defn}

\begin{rem}\label{rem:RD}
Note that every element of $R(D)$ is a finite sum of unit fractions from $D$, and that the fraction field of $R(D)$ is $K$.
%
%It follows that $R(D) = \{\alpha \in K\setminus \{0\} \mid \alpha \text{ is } D\text{-Egyptian}\} \cup \{0\}$, and that $D$ is Egyptian if and only if $R(D) = K$.
\end{rem}

\begin{rem}
The term ``Bonaccian" is in honor of Fibonacci, also known as Leonardo of Pisa, who proved that the integers are Bonaccian.  In particular, he showed that any positive \emph{proper} fraction may be written as a sum of \emph{distinct} unit fractions.  The same result then trivially follows for negative proper fractions (by negating all the denominators).  The proof of Theorem~\ref{thm:lowerdegree} is based on Fibonacci's algorithm.
\end{rem}

%\begin{question}
%Is $R(D)$ always local?
%\end{question}

\begin{prop}\label{pr:Bonval}
$D$ is Bonaccian if and only if $R(D)$ is a valuation ring, whereas $D$ is Egyptian if and only if $R(D) = \Frac(D)$ if and only if $D \subseteq R(D)$.
\end{prop}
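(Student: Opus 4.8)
The plan is to reduce the entire statement to a single workhorse observation: for $\alpha \in K$, being $D$-Egyptian is the same as lying in $R(D)$. This identification rests on two facts already in hand. First, by Remark~\ref{rem:RD} every element of $R(D)$ is a finite sum of unit fractions; the underlying point is that unit fractions are closed under multiplication ($\frac1{d}\cdot\frac1{d'}=\frac1{dd'}$) and under negation ($-\frac1{d}=\frac1{-d}$), so the subring they generate is just their additive span. Second, by \cite[Theorem 2]{GLO-Egypt} any finite sum of unit fractions can be rewritten as a sum of \emph{distinct} unit fractions. Combining these gives, for nonzero $\alpha \in K$, that $\alpha \in R(D)$ if and only if $\alpha$ is $D$-Egyptian. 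I would establish this equivalence first, as it drives both halves of the proposition.

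For the first biconditional I would recall that $\Frac(R(D))=K$ (Remark~\ref{rem:RD}), so $R(D)$ is an integral domain whose fraction field is $K$. By the standard characterization, such a ring is a valuation ring precisely when, for every nonzero $\alpha \in K$, at least one of $\alpha$ and $\alpha^{-1}$ belongs to $R(D)$. Translating membership in $R(D)$ back to $D$-Egyptianness via the workhorse, this condition is exactly the defining property that $D$ be Bonaccian. Thus this direction becomes a dictionary translation once the workhorse equivalence is available.

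For the second chain, $D$ is Egyptian by definition means every nonzero element of $K$ is $D$-Egyptian, i.e. (workhorse) $K\setminus\{0\}\subseteq R(D)$; since $0\in R(D)$ as well, this says $K\subseteq R(D)$, and combined with $R(D)\subseteq K$ yields $R(D)=K=\Frac(D)$. The implication $R(D)=\Frac(D)\Rightarrow D\subseteq R(D)$ is immediate. For the converse, assuming $D\subseteq R(D)$, I would observe that $R(D)$ already contains every unit fraction $1/d$, hence every product $d'\cdot(1/d)=d'/d$ with $d,d'\in D$ and $d\neq 0$; as these products exhaust $K$, we conclude $K\subseteq R(D)$ and therefore $R(D)=\Frac(D)$.

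The only genuinely nontrivial input is the workhorse equivalence, and within it the distinctness clause --- that an arbitrary finite sum of unit fractions can be reorganized into a sum of \emph{distinct} unit fractions. This is precisely what is imported from \cite[Theorem 2]{GLO-Egypt}, so the main obstacle is dispatched by citation; everything that remains is formal, namely unwinding the valuation-ring criterion and exploiting the multiplicative structure of $K$.
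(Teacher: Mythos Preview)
Your proof is correct and follows essentially the same approach as the paper: both arguments first establish (via Remark~\ref{rem:RD} and \cite[Theorem~2]{GLO-Egypt}) that the nonzero elements of $R(D)$ coincide with the $D$-Egyptian elements of $K$, and then observe that the proposition is a direct translation of definitions. You simply spell out in detail what the paper compresses into ``all statements of the proposition follow from the definitions.''
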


\begin{proof}
Recall \cite[Theorem 2]{GLO-Egypt} that any finite sum of unit fractions from $D$, even if not all the reciprocals are distinct, is Egyptian.  Hence by Remark~\ref{rem:RD}, the nonzero elements of $R(D)$ are  exactly the $D$-Egyptian elements of $K$.  Then all statements of the proposition follow from the definitions.
%
%By the definitions.
\end{proof}

%\begin{question}
%If $D$ is Noetherian (resp. of finite Krull dimension), is $R(D)$ also?
%\end{question}

As there are varying definitions of Euclidean domains and Euclidean functions in the literature, we fix the following definitions below.

\begin{defn}
    Let $D$ be an integral domain.  A \emph{Euclidean function} on $D$ is a function $f: D\setminus \{0\} \ra \Z$ such that for any nonzero $a,b\in D$, \begin{enumerate}
        \item $f(ab) \geq f(a)$, and
        \item There exist $q,r \in D$ such that $b=aq+r$ and either $r=0$ or $f(r) < f(a)$.
    \end{enumerate}
    A \emph{Euclidean domain} is an integral domain that admits a Euclidean function.
\end{defn}

Here are some well-known facts about Euclidean domains and functions.

\begin{prop}\label{pr:Euclid}
Let $D$ be a Euclidean domain, with Euclidean function $f$.  Then: \begin{enumerate}
    \item For any unit $u$ of $D$, $f(u)=f(1)$.
    \item For any nonzero $a,b\in D$ such that $a$ is not a unit, we have $f(ab)>f(b)$.  In particular, $f(a)>f(1)$.
    \item For any nonzero $a,b,c\in D$ such that $a$ is not a unit, there is some positive integer $n$ such that $f(a^nb) > f(c)$.
 \end{enumerate}
\end{prop}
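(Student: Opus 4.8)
The plan is to derive all three parts directly from the two axioms for $f$, with part (2) serving as the crux and parts (1) and (3) following quickly from it.

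For part (1), I would sandwich $f(u)$ between two copies of $f(1)$ using only axiom (1). Applying $f(xy)\geq f(x)$ with $x=1$, $y=u$ gives $f(u)=f(1\cdot u)\geq f(1)$; applying it instead with $x=u$, $y=u^{-1}$ gives $f(1)=f(u u^{-1})\geq f(u)$. These two inequalities together force $f(u)=f(1)$.

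For part (2) --- the main obstacle --- the idea is to run the division algorithm of axiom (2) with $b$ as the dividend and the \emph{larger} element $ab$ as the divisor. This produces $q,r\in D$ with $b=(ab)q+r$ and either $r=0$ or $f(r)<f(ab)$. The key move is to rule out $r=0$: if $r=0$ then $b=a(bq)$, so $b(aq-1)=0$ in the domain $D$, and since $b\neq 0$ this forces $aq=1$, contradicting the hypothesis that $a$ is not a unit. Hence $r\neq 0$ and $f(r)<f(ab)$. Then I would rewrite $r=b(1-aq)$ and apply axiom (1) in the form $f\bigl(b\cdot(1-aq)\bigr)\geq f(b)$ to obtain $f(b)\leq f(r)$. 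Chaining these gives $f(b)\leq f(r)<f(ab)$, which is exactly the claimed strict inequality. The ``in particular'' clause is then just the special case $b=1$, giving $f(a)>f(1)$ whenever $a$ is not a unit.

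For part (3), I would iterate part (2). Since $a$ is not a unit and each $a^k b$ is nonzero, part (2) yields $f(a^{k+1}b)=f\bigl(a\cdot a^k b\bigr)>f(a^k b)$ for every $k\geq 0$; and because $f$ is integer-valued, each strict increase is by at least $1$, so $f(a^n b)\geq f(b)+n$ by an easy induction. Choosing any integer $n>f(c)-f(b)$ then gives $f(a^n b)\geq f(b)+n>f(c)$, as desired. The only genuinely subtle point in the whole argument is the cancellation step forcing $r\neq 0$ in part (2); once that is in hand, everything else is a routine application of the axioms together with the integrality of $f$.
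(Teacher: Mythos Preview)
Your proposal is correct and follows essentially the same approach as the paper: the same sandwich argument for (1), the same division of $b$ by $ab$ with the key factorization $r=b(1-aq)$ for (2), and the same integrality-and-iteration argument for (3). The only cosmetic difference is that in (2) the paper assumes $f(ab)=f(b)$ and derives a contradiction, whereas you prove the strict inequality directly via the chain $f(b)\leq f(r)<f(ab)$; the underlying ideas are identical.
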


\begin{proof}
(1)    Let $u,v\in D$ with $uv=1$.  Then $f(1) =f(uv) \geq f(u)$, but also $f(u) = f(1\cdot u) \geq f(1)$.

(2)    Now let $a,b \in D\setminus \{0\}$ such that $a$ is not a unit. Then $f(ab) \geq f(b)$.  If equality holds, there exist $q,r \in D$ with $b=abq + r$ and either $r=0$ or $f(r) < f(ab)=f(b)$. In the first case, we have $b=abq$, so that $b(1-aq) = 0$, and since $b\neq 0$ and $D$ is a domain, we have $aq=1$, contradicting the assumption that $a$ is a nonunit.  In the second case, we have $r=b(1-aq)$, so that $f(b) > f(r) = f(b(1-aq)) \geq f(b)$, also a contradiction. Thus, $f(ab) > f(b)$.  The last statement holds by specializing to the case $b=1$.

(3)    Finally, let $a,b,c \in D \setminus \{0\}$.   For any nonzero $x\in D$, from (2) we have $f(ax) > f(x)$, so that since $f$ takes values in $\Z$, $f(ax) \geq f(x) + 1$. It follows by induction that for any positive integer $n$, $f(a^n b) \geq f(b)+n$.  Thus, if $n$ is chosen so that $n > f(c) - f(b)$, we have $f(a^nb) \geq f(b) + n > f(b) + (f(c) - f(b)) = f(c).$
\end{proof}

\begin{thm}\label{thm:lowerdegree}
Any Euclidean domain $D$ is Bonaccian.  In particular, if $f$ is a Euclidean function on $D$, then for nonzero $a,b \in D$, $a/b$ is Egyptian provided that $f(a) \leq f(b)$.
\end{thm}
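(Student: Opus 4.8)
The plan is to prove the technical ``In particular'' statement first, by strong induction, and then read off the Bonaccian property as an immediate consequence. By the proof of Proposition~\ref{pr:Bonval}, which invokes \cite[Theorem 2]{GLO-Egypt}, the nonzero elements of $R(D)$ are exactly the $D$-Egyptian elements of $K$; so it suffices to exhibit $a/b$ as a \emph{finite sum of unit fractions from $D$}, with no concern for distinctness, i.e. to show $a/b \in R(D)$.

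First I would set up a strong induction on the integer $f(a)$. This is legitimate because Proposition~\ref{pr:Euclid}(1)--(2) shows $f$ is bounded below on $D\setminus\{0\}$ (its minimum value is $f(1)$), so a strictly decreasing sequence of numerator values must terminate. Given nonzero $a,b$ with $f(a)\le f(b)$, apply Euclidean division to write $b=aq+r$ with $r=0$ or $f(r)<f(a)$. The key preliminary point is that $q\neq 0$: if $q=0$ then $r=b$, which when $r\neq 0$ forces $f(b)=f(r)<f(a)$, contradicting $f(a)\le f(b)$, while $r=0$ with $b\neq 0$ also forces $q\neq 0$. Hence $1/q$ is a genuine unit fraction --- it plays the role of the largest unit fraction ``not exceeding'' $a/b$ in Fibonacci's greedy algorithm.

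The heart of the argument is the identity
\[
\frac{a}{b} = \frac{1}{q} + \frac{-r}{bq},
\]
verified by clearing denominators using $aq-b=-r$. When $r=0$ this already displays $a/b=1/q$ as a single unit fraction, which is the base case. When $r\neq 0$, I would apply the inductive hypothesis to $(-r)/(bq)$. Its numerator satisfies $f(-r)=f(r)$ (a one-line consequence of axiom~(1) applied to $r\cdot(-1)$ and $(-r)\cdot(-1)$, since $-1$ is a unit), so $f(-r)<f(a)$; moreover $f(-r)<f(a)\le f(b)\le f(bq)$ by Proposition~\ref{pr:Euclid}(1), so the pair $(-r,bq)$ again satisfies the required numerator/denominator inequality. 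The induction therefore writes $(-r)/(bq)$ as a finite sum of unit fractions, and adding $1/q$ does the same for $a/b$.

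Finally, to conclude that $D$ is Bonaccian, take any nonzero $\alpha\in K$ and write $\alpha=a/b$ with $a,b\in D\setminus\{0\}$. Since $f(a)$ and $f(b)$ are comparable integers, either $f(a)\le f(b)$, so that $\alpha=a/b$ is Egyptian, or $f(b)\le f(a)$, so that $\alpha^{-1}=b/a$ is Egyptian. I expect the only delicate points to be bookkeeping: confirming $q\neq 0$ so the greedy unit fraction $1/q$ exists, and checking that the size inequality $f(\text{numerator})\le f(\text{denominator})$ is preserved under the recursion, which is precisely what makes the greedy step legitimate in the absence of any ordering on $K$. The appeal to \cite[Theorem 2]{GLO-Egypt} is what lets me ignore repetitions (such as a recurring $1/q$) and still land on an honestly Egyptian, i.e. distinct-denominator, expression.
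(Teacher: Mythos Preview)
Your proof is correct and follows essentially the same greedy-algorithm induction on $f(a)$ as the paper, with only cosmetic differences (you treat the case $r=0$ as the terminating step rather than separately handling the base case where $a$ is a unit, and you write the remainder term as $(-r)/(bq)$ rather than $r/(-bq)$). One small slip: the inequality $f(b)\le f(bq)$ comes from axiom~(1) of the Euclidean function, not from Proposition~\ref{pr:Euclid}(1), which concerns units.
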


\begin{proof}
Assume $a,b$ are nonzero elements of $D$, with $f(a) \leq f(b)$.  By \cite[Theorem 2]{GLO-Egypt}, it suffices to show that $a/b \in R(D)$.

The proof is by induction on the number $f(a)$.  By Proposition~\ref{pr:Euclid}, the smallest possible value occurs precisely when $a$ is a unit of $D$, in which case $f(a)=f(1)$.  But then $a^{-1} \in D$, so that $\frac ab = \frac 1 {a^{-1}b}\in R(D)$.

For the inductive step, assume $f(a)>f(1)$.  There exist $q,r \in D$ such that $b=qa+r$ and either $r=0$ or $f(r)<f(a)$.  If $r=0$ then $aq=b\neq 0$, so $q \neq 0$.  Thus, we can divide by $bq$ and see that $\frac ab=\frac 1q \in R(D)$.  If $r\neq 0$, so that $f(r)<f(a)$, then since $f(a) \leq f(b)$, it follows that $f(r)<f(b)$, so that again we have $q \neq 0$ (as otherwise $b=r$).  Now divide through by $bq$ to get $\frac 1q = \frac ab + \frac r{bq}$.
That is, $\frac ab = \frac 1q + \frac r {-bq}$.  But since $f(r)<f(a)$, $f(a) \leq f(b)$, and $f(b) \leq f(b(-q)) = f(-bq)$ by Proposition~\ref{pr:Euclid}, we have $f(r) < f(-bq)$, so by induction we have that $\frac r{-bq} \in R(D)$.  Thus $\frac ab$, being a sum of two elements of $R(D)$, is itself in $R(D)$.
%
%That is, $\frac ab = \frac 1q + \frac 1 {-q} \left(\frac r b\right)$.  But since $f(r)< f(a)$, so that $f(r) <f(b)$ (since $f(a) \leq f(b)$), we have by induction that $\frac rb$ is an Egyptian element of the fraction field of $D$.  Thus, $\frac ab = \frac 1q + \frac 1{-q}\left(\frac rb\right)$ is also Egyptian.
\end{proof}

\begin{rem}
The above proof is based on Fibonacci's classic explanation in the book \emph{Liber Abacci} (1202) of the fact that any positive rational number can be written as a sum of reciprocals of distinct positive integers.  For a translation of the relevant portion of his book, see \cite{DuGr-FibE}.
%Pierce and Engel/Cohen algorithms for representing a rational number as a finite sum of unit fractions [[CITE]].  A slight variation shows that the Fibonacci/Sylvester algorithm would also work, albeit yielding a different sum of unit fractions.  [[CITE]]
\end{rem}

\begin{example}\label{ex:notBon}
Let $D = k[x,y]$, where $k$ is a field.  Then I claim neither $x/y$ nor $y/x$ is $D$-Egyptian.  By symmetry, I need only show that $x/y$ is not $D$-Egyptian.

To see this, suppose we have \[
\frac xy = \frac 1 {f_1} + \cdots + \frac 1 {f_n}
\]
with $f_1, \ldots, f_n \in k[x,y]$ nonzero polynomials.  Then each $f_i/y$ is a nonzero element of the ring $k(y)[x]$, where $k(y)$ is the fraction field of $k[y]$.  Thus, \[
x=\frac 1 {f_1/y} + \cdots + \frac 1 {f_n/y},
\]
so that $x$ is an Egyptian element of $k(y)[x]$, contradicting \cite[Proposition 1 and its proof]{GLO-Egypt}.

Recall \cite[Definition 3.1]{nme-Edom} that an integral domain $D$ is \emph{locally Egyptian} if there exist nonzero $c_1, \ldots, c_n \in D$ that generate the unit ideal and such that $D[1/c_i]$ is Egyptian for each $1\leq i \leq n$.  I showed \cite[Corollary 3.11]{nme-Edom} that any finitely generated $k$-algebra domain is locally Egyptian.
Thus, the above calculation shows that not all locally Egyptian domains are Bonaccian.
\end{example}

We are ready to state the main theorem:
\begin{thm}[Main Theorem]\label{thm:main}
Let $D$ be a Euclidean domain that is not Egyptian.  Then $R(D)$ is a DVR, and the units of $D$ together with $0$ form a field.
\end{thm}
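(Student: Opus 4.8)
The plan is to work directly with the valuation. By Theorem~\ref{thm:lowerdegree} together with Proposition~\ref{pr:Bonval}, $R$ is a valuation ring, and since $D$ is not Egyptian it is nontrivial; fix an associated valuation $v\colon K^\times\to\Gamma$ with value group $\Gamma$ and $R=\{x:v(x)\ge 0\}$. Every nonzero $d\in D$ satisfies $1/d\in R$, so $v(d)\le 0$, and the criterion in Theorem~\ref{thm:lowerdegree} gives $f(a)\le f(b)\Rightarrow a/b\in R\Rightarrow v(a)\ge v(b)$. Reading this with $f(a)=f(b)$ in both directions shows that $v(d)$ depends only on $f(d)$, so there is a well-defined order-reversing function $\phi$ from the image of $f$ into $\Gamma_{\le 0}$ with $v(d)=\phi(f(d))$ and $\phi(f(1))=0$. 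Since $K=\Frac(D)$, the group $\Gamma$ is generated by the values $v(d)$, and as $R\ne K$ some nonzero $e\in D$ has $v(e)<0$.

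To see that the units of $D$ together with $0$ form a field, the only nonobvious point is closure under addition, for which it suffices to show every nonzero non-unit $d$ has $v(d)<0$: then for units $u,w$ with $u+w\ne 0$ one has $v(u+w)\ge\min(v(u),v(w))=0$ while $u+w\in D\setminus\{0\}$ forces $v(u+w)\le 0$, so $v(u+w)=0$ and $u+w$ is a unit. Suppose instead some non-unit $d$ had $v(d)=0$. Then $v(d^n)=0$ for all $n$, while $f(d^n)\to\infty$ by Proposition~\ref{pr:Euclid}. On the other hand $v(e^n)=n\,v(e)\to-\infty$ with $f(e^n)\to\infty$, and since $\phi$ is nonincreasing this forces $\phi(m)\to-\infty$ as $m\to\infty$, contradicting $\phi(f(d^n))=0$ along $f(d^n)\to\infty$.

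For the DVR claim I would show $\Gamma$ is infinite cyclic. Let $d_*\in D$ have least $f$-value among elements with $v<0$, set $\delta:=-v(d_*)>0$, and note $\phi(m)=0$ for every $m$ below that least value. The set $M:=\{-v(d):0\ne d\in D\}=\{-\phi(m)\}$ is a submonoid of $\Gamma_{\ge 0}$ generating $\Gamma$ (closure under addition comes from multiplicativity of $v$ and $cd\in D$), and it is well-ordered because it is the image of the integer-indexed nonincreasing family $\phi$. I then prove $-v(d)\in\N\delta$ for all $d$ by strong induction on $M$: dividing $d=qd_*+r$ with $r=0$ or $f(r)$ below the threshold, the remainder satisfies $v(r)=0$; one checks $q\ne 0$ and, since $v(qd_*)\le-\delta<0=v(r)$ when $r\ne 0$, that $v(d)=v(qd_*)=v(q)-\delta$ in either case. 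As $-v(q)=-v(d)-\delta$ is a strictly smaller element of $M$, induction yields $-v(d)\in\N\delta$. Hence $M=\N\delta$ and $\Gamma=M-M=\Z\delta\cong\Z$, so $R$ is a nontrivial DVR.

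The main obstacle is pinning $\Gamma$ down to $\Z$ rather than to a merely discrete-looking or rank-one group. Multiplicativity alone only makes $M$ a well-ordered monoid, which is not enough: for instance $\N+\N\sqrt2$ is a well-ordered monoid whose difference group is dense in $\R$. The decisive extra input is that the Euclidean remainder $r$ has $f$-value in the bottom stratum and hence $v(r)=0$, which collapses the division to the single relation $v(d)=v(q)-\delta$ and lets the descent express every value as a multiple of $\delta$. This is exactly where the integrality of $f$ is used: its values lie in $\Z$, so the remainder lands with strictly smaller $f$-value in the stratum where $\phi$ vanishes.
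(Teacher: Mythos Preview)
Your proof is correct and takes a genuinely different route from the paper's.  The paper proceeds element-wise: it first proves separately (Proposition~\ref{pr:units}) that $R\cap D$ consists of the units together with $0$, by choosing $y\in D\setminus R$ of minimal $f$-value and showing that any nonunit $x\in R$ would force $y\in R$; then, with $\pi=1/y$ for $y$ a nonunit of minimal $f$-value, it shows by two successive Euclidean divisions that every nonunit of $R$ lies in $\pi R$, and finally checks $\bigcap_n\pi^n R=0$ to conclude $R$ is a DVR via the Atiyah--Macdonald criterion.  Your argument instead exploits a single structural observation the paper never makes explicit: from Theorem~\ref{thm:lowerdegree}, $f(a)\le f(b)$ implies $v(a)\ge v(b)$, so $v$ factors through $f$ via an order-reversing map $\phi$.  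This lets you handle both conclusions uniformly---the unit statement falls out from monotonicity of $\phi$ together with $f(d^n)\to\infty$, and the DVR statement becomes a computation of the value group by strong induction on the well-ordered monoid $M=-\phi(\mathrm{image}\,f)$, using a single Euclidean division by $d_*$ to peel off one copy of $\delta$.  The paper's approach is more elementary (no value groups, no transfinite induction) and self-contained; yours is more conceptual and isolates cleanly why integrality of $f$ matters, as you note in your final paragraph.  One cosmetic point: your phrase ``$\phi(m)\to-\infty$'' is ambiguous in a general ordered group, but the argument only needs $\phi(m)\le v(e)<0$ once $m\ge f(e)$, which is immediate from monotonicity.
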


%To prove this, we establish some subsidiary results.  First, ...
First, we will prove the part about the units comprising the nonzero elements of a field.
\begin{prop}\label{pr:units}
Let $(D,f)$ be a Euclidean domain, $K = \Frac D$, and $R=R(D)$.  If $D \nsubseteq R$ (i.e. $R \neq K$) then $R \cap D = \{$the units of $D\} \cup \{0\}$.  In particular, the units of $D$ along with zero form a \emph{field} contained in $D$.
\end{prop}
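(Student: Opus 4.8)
The plan is to prove the asserted equality $R \cap D = \{\text{the units of } D\} \cup \{0\}$ by establishing its two inclusions separately, and then to read off the field structure at the end. One inclusion is free and requires no hypothesis: certainly $0 \in R \cap D$, and for any unit $u$ of $D$ the element $u^{-1}$ is a nonzero element of $D$, so $u = 1/u^{-1}$ is a unit fraction and hence lies in $R$. Thus $\{\text{units}\}\cup\{0\} \subseteq R \cap D$ unconditionally.

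For the reverse inclusion I would argue by contraposition: assuming some nonzero \emph{nonunit} $d \in D$ happens to lie in $R$, I will show that $R = K$, contradicting the standing hypothesis $R \neq K$. The whole argument rests on one idea. Fix an arbitrary $a/b \in K$ with $a,b$ nonzero in $D$. Because $d$ is a nonunit, Proposition~\ref{pr:Euclid}(3) furnishes a positive integer $n$ with $f(d^n b) > f(a)$, so in particular $f(a) \le f(d^n b)$; Theorem~\ref{thm:lowerdegree} then places $\frac{a}{d^n b}$ in $R$. Since $d \in R$ and $R$ is a ring, $d^n \in R$ as well, and the identity
\[
\frac{a}{b} = \frac{a}{d^n b}\cdot d^n
\]
exhibits $a/b$ as a product of two elements of $R$. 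As $a/b$ was arbitrary, $R = K$, giving the desired contradiction; hence every nonzero element of $R \cap D$ is a unit of $D$.

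Combining the two inclusions under the hypothesis $R \neq K$ yields the stated equality. For the concluding ``in particular'', I would observe that $R \cap D$, being the intersection of two subrings of $K$, is itself a subring of $K$ containing $0$ and $1$; and since each of its nonzero elements is a unit of $D$ whose inverse is again a unit of $D$ — and therefore again in $R \cap D$ — this subring is closed under inverses of nonzero elements, so it is a field.

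The crux, and essentially the only nonroutine point, is the reverse inclusion: the difficulty is to leverage the single datum $d \in R$ into membership of \emph{every} quotient of $K$ in $R$. The resolution is the factorization $\frac ab = \frac{a}{d^n b}\cdot d^n$, which writes an arbitrary element of $K$ as $d^n \in R$ times a quotient $\frac{a}{d^n b}$ whose numerator has $f$-value at most that of its denominator — precisely the regime handled by Theorem~\ref{thm:lowerdegree} — with the requisite exponent $n$ supplied by Proposition~\ref{pr:Euclid}(3).
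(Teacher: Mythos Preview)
Your proof is correct, but it follows a somewhat different route from the paper's. The paper argues by minimal counterexample: it picks $y \in D \setminus R$ of least $f$-value, then shows that if a nonzero nonunit $x$ were in $R$, a single Euclidean division $x^n = qy + r$ (with $n$ chosen via Proposition~\ref{pr:Euclid}(3) so that $f(x^n) \geq f(y)$) would force $y = \frac{1}{q}(x^n - r) \in R$, a contradiction. You instead bypass the minimal element and show directly that $R = K$: given any $a/b \in K$, you inflate the denominator to $d^n b$ so that Theorem~\ref{thm:lowerdegree} applies, and then recover $a/b$ by multiplying back by $d^n \in R$. Your version is slicker because it uses Theorem~\ref{thm:lowerdegree} as a black box rather than redoing a division step by hand; the paper's version is marginally more self-contained, needing only one application of the division algorithm rather than the inductive machinery packaged inside Theorem~\ref{thm:lowerdegree}. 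Both hinge on the same underlying fact (Proposition~\ref{pr:Euclid}(3)) that powers of a nonunit eventually dominate any prescribed $f$-value.
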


\begin{rem}\label{rem:ua}
    It turns out that the class of rings $R$ such that the units and nilpotent elements comprise a subring of $R$ is interesting.  See \cite{nmeSh-unitadd} for a study of such rings, which are called \emph{unit-additive}.  In those terms, Proposition~\ref{pr:units} implies that any Euclidean domain must be either Egyptian or unit-additive.
\end{rem}
\begin{proof}[Proof of Proposition~\ref{pr:units}]
Let $X := \{f(d) \mid d \in D \setminus R\}$.  Then $X$ is a nonempty set of integers, bounded below by the integer $f(1)$ due to Proposition~\ref{pr:Euclid}, hence $X$ admits a minimal member $m$.  Choose $y\in D \setminus R$ such that $f(y)=m$.

Now let $x$ be any nonzero nonunit of $D$.  Suppose $x\in R$.  Then by Proposition~\ref{pr:Euclid}, there is some $n\in \N$ such that $f(x^n) =f(x^n \cdot 1)\geq f(y)$.  Then there exist $q,r \in D$ such that $x^n = qy + r$ and either $r=0$ or $f(r)<f(y)$.  In either case, it follows that $r\in R$ and $q\neq 0$.  Since $x^n, r, 1/q \in R$, it follows that $y = \frac 1q (x^n-r) \in R$, which contradicts our choice of $y$.

Thus, no nonzero nonunits of $D$ are in $R$.  But clearly $0\in R$ and all units of $D$ are in $R$.

The final statement then holds because $D \cap R$ must be a subring of $K$, and for any $0\neq d \in D \cap R$, we have that $d$ is a unit of $D$, whence $d^{-1} \in D$, so that since $d^{-1} \in R$ (by definition of $R$), we have $d^{-1} \in D \cap R$.
\end{proof}

\begin{cor}\label{cor:EuEg}
Any Euclidean domain that doesn't contain a field is Egyptian.
\end{cor}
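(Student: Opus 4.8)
The plan is to prove the contrapositive: if a Euclidean domain $D$ fails to be Egyptian, then $D$ contains a field. Framed this way, the corollary reduces almost entirely to Proposition~\ref{pr:units}, and notably requires only that weaker statement rather than the full DVR conclusion of the Main Theorem; the remaining work is just translation between the several equivalent formulations of ``Egyptian.''

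First I would unwind the hypothesis. By Proposition~\ref{pr:Bonval}, $D$ is Egyptian if and only if $R(D) = \Frac(D) = K$, equivalently $D \subseteq R(D)$. Hence the assumption that $D$ is \emph{not} Egyptian is literally the statement $D \nsubseteq R(D)$, i.e. $R(D) \neq K$. This is precisely the standing hypothesis of Proposition~\ref{pr:units}, so the pieces are set up to line up directly.

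Next I would invoke Proposition~\ref{pr:units} as a black box: under the hypothesis $D \nsubseteq R(D)$, it asserts that the units of $D$ together with $0$ form a field contained in $D$. In particular $D$ contains a field, which is exactly the negation of the corollary's hypothesis. This closes the contrapositive argument.

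I expect no genuine obstacle here, since the substantive content has already been supplied by Proposition~\ref{pr:units}; the only care needed is bookkeeping around the equivalences, namely confirming that ``$D$ Egyptian,'' ``$R(D)=K$,'' and ``$D \subseteq R(D)$'' are interchangeable (the content of Proposition~\ref{pr:Bonval}) so that the hypotheses match up when Proposition~\ref{pr:units} is applied. It is also worth noting in passing that fields are themselves (trivially) Euclidean and Egyptian, so the hypothesis ``does not contain a field'' consistently excludes that degenerate case rather than conflicting with the conclusion.
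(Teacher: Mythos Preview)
Your proposal is correct and follows exactly the paper's approach: the paper's proof is simply ``Combine Propositions~\ref{pr:Bonval} and \ref{pr:units},'' and you have spelled out precisely how those two propositions fit together via the contrapositive. Your observation that only Proposition~\ref{pr:units} (not the full DVR conclusion) is needed is also accurate.
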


\begin{proof}
    Combine Propositions~\ref{pr:Bonval} and \ref{pr:units}.
\end{proof}

\begin{proof}[Proof of Theorem~\ref{thm:main}]
Let $y\in D$ be a nonzero nonunit such that $f(y) = \min \{f(x) \mid x \text{ is a nonzero nonunit of } D\}$. Set $\pi := 1/y$ and $R := R(D)$.  Note that $\pi \in R$ (as it is the reciprocal of an element of $D$), and since $y=\pi^{-1} \notin R$ by Proposition~\ref{pr:units}, $\pi$ is a nonunit of $R$.

Let $a,b \in D \setminus \{0\}$ such that $\alpha := a/b$ is a nonunit of $R$.  Then by Theorem~\ref{thm:lowerdegree}, $f(a) < f(b)$ (since if $f(b)\leq f(a)$ then $b/a = \alpha^{-1} \in R$).  Then there exist $q,r \in D$ with $b=aq+r$ and either $f(r)<f(a)$ or $r=0$. In either case it follows that $b\neq r$, so that $q\neq 0$.

If $q\in R$, then since $\frac ra \in R$ by Theorem~\ref{thm:lowerdegree}, we have $\alpha^{-1} =\frac ba = q + \frac ra \in R$.  But this contradicts the assumption that $\alpha$ is not a unit of $R$.  Thus, $q\notin R$.  In particular, $q$ must be a nonunit of $D$, so by choice of $y$ we have $f(q) \geq f(y)$. Therefore, there exist $c,d \in D$ with $c\neq 0$ and either $d=0$ or $f(d)<f(y)$ (so that in either case, $d\in R$) such that $q=yc+d$.  Then \[
\alpha = \frac ab = \frac 1q - \frac r{qb} = \frac 1q \left(1-\frac rb\right) \in \frac 1q R,
\]
since either $r=0$ or $f(r) < f(b)$, and \[
\frac 1q = \frac 1y \left( \frac 1c - \frac d {qc}\right) \in \frac 1yR = \pi R.
\]
Thus, $\alpha \in \pi R$, so that $\pi R$ consists of \emph{all nonunits} of $R$.

Since $R$ is a valuation ring (by Proposition~\ref{pr:Bonval} and Theorem~\ref{thm:lowerdegree}) with principal maximal ideal $\pi R$ (by the above), to prove it is a DVR it suffices to show that $\bigcap_{n \in \N} \pi^n R = 0$ \cite[p. 99, exercise 4]{AtMac-ICA}.  For this, let $\alpha = \frac ab$ (with $a,b \in D$) be an arbitrary nonzero element of $R$.  By Proposition~\ref{pr:Euclid}, there is some $n\geq 0$ such that $f(y^na) \geq f(b)$.  %Let $n$ be minimal with this property. 
Then by Theorem~\ref{thm:lowerdegree}, $\pi^n \alpha^{-1} = \frac b {y^na} \in R$, whence $\pi^n \in \alpha R$.  Now suppose $\alpha \in \pi^{n+1}R$.  Then $\pi^n \in \pi^{n+1}R$, say $\pi^n = \pi^{n+1}\beta$, $\beta \in R$ so that $\pi^n (1-\pi \beta) = 0$.  Since $\pi^n \neq 0$ and $R$ is a domain, $\pi\beta = 1$.  But then $\pi$ is a unit of $R$, a contradiction. Thus, $\alpha \notin \pi^{n+1}R$, so that $\alpha \notin \bigcap_k \pi^k R$.
\end{proof}

\begin{example}\label{ex:kxBon}
Recall from \cite[Proposition 1 and its proof]{GLO-Egypt} that if $k$ is a field and $x$ an indeterminate, then $x$ is \emph{not} a sum of unit fractions from $D := k[x]$.  Thus by the main theorem above, $R:= R(D)$ is a DVR.  In fact, we can determine its identity precisely.  
% is easy to see by elementary means.
Namely, we clearly have $k[x^{-1}] \subseteq R$, and for any $f(1/x) \in k[x^{-1}] \setminus x^{-1} k[x^{-1}]$, let $d=\deg f$.  Then as $f$ has nonzero constant term, we have $g=x^d f(1/x)$ is a nonzero polynomial in $k[x]$ of degree exactly $d$.  Thus, $\frac 1 {f(1/x)} = \frac {x^d} {g(x)}$ is a fraction where the numerator and denominator both have degree $d$.  But since degree is a Euclidean function on $D$, it follows from Theorem~\ref{thm:lowerdegree} that $\frac 1 {f(1/x)} = \frac {x^d}{g(x)}$ is $D$-Egyptian. Hence $k[x^{-1}]_{(x^{-1})} \subseteq R$.  But there are no rings properly between $k[x^{-1}]_{(x^{-1})}$ and $k(x)$, so that since $R \neq k(x)$, we have $R=k[x^{-1}]_{(x^{-1})}$.
\end{example}

%\begin{question}
%If $D$ is a PID (but not Euclidean), is it Bonaccian?  If so, is $R(D)$ a DVR?
%\end{question}

\begin{prop}\label{pr:EgBo}
%Let $D$ be an Egyptian domain, and $x$ an indeterminate.  Then $D[x]$ is Bonaccian.    
Let $D$ be an Egyptian domain, let $K$ be its fraction field, and let $x$ be an indeterminate.  Then $R(D[x]) = R(K[x])$.  Hence $D[x]$ is Bonaccian.
\end{prop}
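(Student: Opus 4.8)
The plan is to prove the two inclusions $R(D[x]) \subseteq R(K[x])$ and $R(K[x]) \subseteq R(D[x])$ separately, working inside the common fraction field $K(x) = \Frac(D[x]) = \Frac(K[x])$. The first inclusion is immediate: since $D[x] \subseteq K[x]$, every unit fraction $1/g$ with $0 \neq g \in D[x]$ is also a unit fraction from $K[x]$, so each generator of $R(D[x])$ lies in $R(K[x])$, and hence $R(D[x]) \subseteq R(K[x])$.

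The decisive observation is that the Egyptian hypothesis forces $K \subseteq R(D[x])$. Indeed, every constant $d \in D \setminus \{0\}$ is a nonzero element of $D[x]$, so each unit fraction $1/d$ from $D$ is also a unit fraction from $D[x]$; hence $R(D) \subseteq R(D[x])$. As $D$ is Egyptian, $R(D) = K$ by Proposition~\ref{pr:Bonval}, and therefore $K \subseteq R(D[x])$. This is the step I expect to carry the whole argument, and it is where the hypothesis is used essentially; the rest is formal once it is in hand.

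For the reverse inclusion it then suffices to show that each generator $1/g$ of $R(K[x])$, with $0 \neq g \in K[x]$, lies in $R(D[x])$. I would clear denominators: choosing a common denominator $c \in D \setminus \{0\}$ for the finitely many coefficients of $g$, one gets $cg \in D[x]$ with $cg \neq 0$, so that $1/(cg) \in R(D[x])$. Writing $\frac 1g = c \cdot \frac 1{cg}$ and using that $c \in K \subseteq R(D[x])$ together with the fact that $R(D[x])$ is a ring, we conclude $1/g \in R(D[x])$. Thus every generator of $R(K[x])$ lies in $R(D[x])$, giving $R(K[x]) \subseteq R(D[x])$ and hence the asserted equality $R(D[x]) = R(K[x])$.

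Finally, the Bonaccian conclusion follows formally. The polynomial ring $K[x]$ is Euclidean via the degree function, hence Bonaccian by Theorem~\ref{thm:lowerdegree}, so $R(K[x])$ is a valuation ring by Proposition~\ref{pr:Bonval}. Since $R(D[x]) = R(K[x])$ is then a valuation ring, Proposition~\ref{pr:Bonval} shows $D[x]$ is Bonaccian. One can say more: as $K[x]$ is a non-Egyptian Euclidean domain, the Main Theorem~\ref{thm:main} (or the explicit computation of Example~\ref{ex:kxBon} with $k$ replaced by $K$) identifies $R(K[x])$ as a DVR, so $R(D[x])$ is in fact a DVR.
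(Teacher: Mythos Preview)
Your proof is correct and follows essentially the same approach as the paper's: both clear denominators to reduce $1/g$ to $c \cdot \frac{1}{cg}$ with $cg \in D[x]$, and both use the Egyptian hypothesis to see that the factor $c$ lies in $R(D[x])$ (the paper writes $c = \sum 1/c_i$ explicitly, while you package this as $K = R(D) \subseteq R(D[x])$). The Bonaccian conclusion is also handled the same way, via the Euclidean structure on $K[x]$ and Example~\ref{ex:kxBon}/Theorem~\ref{thm:main}.
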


\begin{proof}
Clearly the function $R(-)$ is order-preserving on subrings of $K(x)$.  Thus, for the first statement it is enough to show that $R(K[x]) \subseteq R(D[x])$.

Let $0\neq g \in K[x]$.  Then there is some $d \in D \setminus \{0\}$ such that $dg \in D[x]$.  But since $D$ is Egyptian, there exist nonzero $c_1, \ldots, c_s \in D$ with $d = \sum_{i=1}^s \frac 1{c_i}$.  Thus, \[
\frac 1g = \frac d{dg} = \frac 1{dg} \left(\frac 1{c_1} + \cdots + \frac 1{c_s}\right) \in R(D[x]).
\]
Since $R(K[x])$ is generated by such terms $\frac 1g$, it follows that $R(K[x]) \subseteq R(D[x])$.

For the final statement, Example~\ref{ex:kxBon} shows that $R(K[x]) \cong K[x^{-1}]_{(x^{-1})}$, a DVR.  But then since $R(D[x])= R(K[x])$, $D[x]$ is Bonaccian.
%Let $f,g \in D[x]\setminus \{0\}$ with $\deg f \leq \deg g$.
%Let $K$ be the fraction field of $D$.  Then since $K[x]$ is a Euclidean domain, with Euclidean function given by degree, it follows from Theorem~\ref{thm:lowerdegree} that there exist $h_1, \ldots, h_t \in K[x]$ with \[
%\frac fg = \sum_{j=1}^t \frac 1{h_j}.
%\]
%Clearing denominators, there is some nonzero $d\in D$ such that for each $1\leq j \leq t$, $g_j := dh_j \in D[x]$.  But since $D$ is Egyptian, there exist nonzero $c_1, \ldots, c_s \in D$ with $d = \sum_{i=1}^s \frac 1{c_i}$.  Thus, $\frac fg = (\sum_{i=1}^s 1/c_i) \cdot (\sum_{j=1}^t 1/g_j)$ multiplies out to a sum of reciprocals of elements of $D[x] \setminus \{0\}$.  Hence $D[x]$ is Bonaccian.
\end{proof}

\section{Some questions}
In the process of this research, some natural questions came up regarding properties of reciprocal complements.  They are enumerated below.

\begin{question}
Is $R(D)$ always local?
\end{question}
The answer to the above is ``yes'' provided $D$ is a Euclidean domain, as then $R(D)$ is either a DVR or a field.

\begin{question}
If $D$ is Noetherian (resp. of finite Krull dimension), is $R(D)$ also?
\end{question}
Again the answer is ``yes'' when $D$ is a Euclidean domain.

\begin{question}
If $D$ is a PID (but not Euclidean), is it Bonaccian?  If so, is $R(D)$ a DVR?
\end{question}
Due to Example~\ref{ex:notBon}, some condition must be imposed.  But perhaps being a PID is sufficient.

\begin{question}
    Suppose $D$ is a Euclidean domain that is unit-additive (see Remark~\ref{rem:ua}).  Can it be Egyptian?
\end{question}
This is a case not covered by the Main Theorem.

\providecommand{\bysame}{\leavevmode\hbox to3em{\hrulefill}\thinspace}
\providecommand{\MR}{\relax\ifhmode\unskip\space\fi MR }
% \MRhref is called by the amsart/book/proc definition of \MR.
\providecommand{\MRhref}[2]{%
  \href{http://www.ams.org/mathscinet-getitem?mr=#1}{#2}
}
\providecommand{\href}[2]{#2}

\end{document}